\begin{document}

\title{Modeling battery cells under discharge\\ using kinetic and
  stochastic battery models}

\author[mathUU]{Ingemar Kaj\corref{cor1}}
\ead{ikaj@math.uu.se}

\address[mathUU]{Department of Mathematics, Uppsala University\\ P.O. Box 480,
SE  751 06 Uppsala, Sweden}

\author[isp,bf]{Victorien Konan\'e}
\ead{konane@math.uu.se}
\address[isp]{International Science Program, Uppsala University}
\address[bf]{Department of Mathematics, University of Ouagadougou, Burkina Faso}

\begin{abstract}
In this paper we review several approaches to mathematical modeling of
simple battery cells and develop these ideas further with emphasis on
charge recovery and the response behavior of batteries to given
external load. We focus on models which use few parameters and basic
battery data, rather than detailed reaction and material
characteristics of a specific battery cell chemistry, starting with
the coupled ODE linear dynamics of the kinetic battery model.  We show
that a related system of PDE with Robin type boundary conditions
arises in the limiting regime of a spatial kinetic battery model, and
provide a new probabilistic representation of the solution in terms of
Brownian motion with drift reflected at the boundaries on both sides
of a finite interval. To compare linear and nonlinear dynamics in
kinetic and stochastic battery models we study Markov chains with
states representing available and remaining capacities of the battery.
A natural scaling limit leads to a class of nonlinear ODE, which can
be solved explicitly and compared with the capacities obtained for the
linear models. To indicate the potential use of the modeling we
discuss briefly comparison of discharge profiles and effects on
battery performance.

\end{abstract} 

\begin{keyword}
battery lifetime; state-of-charge; charge recovery; 
probabilistic solution of PDE; Robin boundary condition;
nonlinear ODE
\end{keyword}

\maketitle

\def\wt{\widetilde}  
\def\R{\mathbb R} 
\def\bbe{\mathbb E} 
\def\S{\mathbb S}
\def\bbp{\mathbb P}  
\def\D{{\mathcal D}} 
\def\X{{\mathcal X}}
\def\H{{\mathcal H}}  
 
\def\supp{\rm supp}
\def\diam{\rm diam}  

\newtheorem{theorem}{Theorem}
\newtheorem*{theorem*}{Theorem}
\newtheorem{corollary}{Corollary}
\newtheorem{lemma}{Lemma}
\newtheorem{proposition}{Proposition}
\newtheorem{definition}{Definition}

\section{Introduction}

The subject of this work is mathematical modeling of state-of-charge 
in simple battery cells, such as a non-rechargeable
3 Volts Lithium coin battery. The goal is to understand the response
of the battery, and ultimately to predict battery lifetime, as
energy is consumed under a given discharge usage
pattern.  The main incentive for our work is the battery usage in
Wireless Sensor Networks and similar Internet-of-Things systems.
These networks consist of inter-connected low-cost nodes, equipped
with basal sensors, computer, radio and a battery, expected to run for
many years under very low intensity loads and short dutycycles.
Within the tight cost restrictions typical of such
  systems, methods or techniques providing ``battery-charge
  indicators'' do not seem to be within reach currently.  To make
progress in this direction it is essential to address the problem of
predicting battery life.  Our paper intends to cover some of the
required modeling groundwork and discuss analysis of linear, nonlinear
and stochastic aspects of modeling battery cells.

Mathematical modeling of batteries has developed over several decades
along with the growth of new battery technologies and materials.  Yet,
there has been relatively little in-depth study of widely available,
inexpensive coin cell batteries and on special load characteristics
including short load periods.  Primarily, lithium and lithium-ion
battery models have been developed within electrochemical engineering.
Two recent survey and review works \cite{land2013,ram2012} represent
the state-of-art of modeling based on the fundamental principles of
electrochemistry, and emphasize the wide range of scales involved.
The temporal and spatial scales of the physics and chemistry of the
battery range from macroscopic level all the way down to the atomistic
level.  The tutorial review by Landstorfer and Jacob \cite{land2013}
provides a framework of non-equilibrium thermodynamics as the
foundation for studying the electrode, electrolyte and interface
reactions in great detail.  The review work by Ramadesigan {\it et
  al.} \cite{ram2012}, summarizes the literature on such models and, in
addition, brings a systems engineering approach applied to Li-ion
batteries.  This type of model can be said to begin with the
pseudo-two-dimensional (P2D) model of Doyle {\it et
  al.} \cite{doy1993}, which leads to a coupled system of non-linear
PDEs. More generally, coupled systems of equations with complex
boundary conditions are derived, which connect charge concentrations
with transport and kinetics of reactant species.  Several approaches
have been proposed to simplify the resulting sets of equations and
allow for numerical computations, see e.g.\ \cite{sub2009,dao2012}.

In this work we use the different mathematical approach to battery
modeling developed in communication engineering for computer science
applications, see e.g. Jongerden and Haverkort \cite{jon2009}.  Where
chemical engineering modeling typically begins with a detailed scheme
of reactions and mechanisms in the various phases and interfaces of
the cell, these models view the battery as a generic device subject to
some fundamental principles. The main focus of the modeling changes
and is now rather the response of the battery to external load.  A
typical purpose is load scheduling to optimize battery utilization.
Important aspects of battery behavior from this point of view are the
rate-capacity effect and charge recovery. Quoting \cite{roh2013}: {\it
  The former refers to the fact that a lower discharge rate is more
  efficient than a higher; more charge can be extracted from the
  battery before reaching a given cut-off value. The latter refers to
  the fact that an intermittent discharge is more efficient than a
  continuous one. Because of these effects, different battery loads
  that use the same total charge do not result in the same device
  lifetime.}  The most basic of these methods use linear ODEs
\cite{man1993,man1994} and gradually build complexity by using PDEs
and other means \cite{rak2003,rao2003,jon2009}.  A further direction
is stochastic modeling using Markov chain dynamics \cite{chi,cra}.

The aim of our work is to investigate to what extent these simplified
battery models are able to capture important aspects of battery
behavior, and to get new insights by developing the mathematical
models further.  Of special interest is the essential response of the battery
to deterministic or random on-off discharge patterns typical for
batteries in wireless sensor networks, and the ability of a cell to
recover charge during operation.
Charge recovery is believed to depend on a number of internal
mechanisms, such as convection, diffusion, migration, and
charge-transfer.   
In the modeling work we put special emphasis on separating the roles
of diffusion, which is the motion of electroactive species in the presence of a
concentration gradient, and migration, the motion of charged species in
an electric field.   

Our starting point is the kinetic battery model of Manwell and McGowen
\cite{man1993,man1994}, which describes the joint evolution of
available charge and bound charge over time. Charge recovery in this
framework consists in the continuous transition of bound charge to
available charge.  As observed in \cite{jon2009} and further
investigated in \cite{kajkon2012}, more general spatial versions of
these models are related to the diffusion model of Rakhmatov and
Vrudhula \cite{rak2003,rao2003}, and leads to a class of second order
diffusion equations with Robin type boundary conditions.  Quoting
\cite{raoetal2005}, the linear dynamics of the kinetic battery model
is {\it useful in getting an intuitive idea of how and why the
  recovery occurs but it needs a number of additions to be useful for
  the types of batteries used in mobile computing}. Using experimental
data from Ni-MH batteries, \cite{raoetal2005} proposed a modified,
non-linear, factor in the flow charge and discussed related stochastic
versions of the model. In an effort to compare more systematically
linear and nonlinear dynamics in kinetic and stochastic battery
models, we study discrete time Markov chains with nonlinear jump
probabilities derived from a simplified charge transport scenario. By
a scaling approximation we obtain in the limit a deterministic
nonlinear ODE, with explicit solutions which, in principle, can be
compared to those of the linear approach. The unifying
  mathematical aspect in our analysis is the representation of
  remaining and available capacities as time-autonomous systems.

In Section 2, following preliminaries on capacities,
  internal charge recovery and discharge profiles, we consider the
kinetic battery model and discuss a variation.  Then we set up an
extended version of the spatial kinetic battery model with a finite
number of serial compartments, derive the spatially continuous
limiting PDE, and give a probabilistic representation of the solution
of the PDE in terms of Brownian motion with drift reflected at the
boundaries on both sides of a finite interval. The solution represents
the capacity storage of a battery and these tools allow us to study
the balance of available and remaining stored capacity.
Section 3 studies a nonlinear, stochastic Markov chain
  model and its deterministic ODE approximation.  With proper choice
of nonlinear dynamics for charge recovery due to transfer, diffusion
and migration effects, we then propose a somewhat wider class of
nonlinear ODE of potential use for battery modeling.  As a
consequence, it is possible to study performance measures such as
battery life, delivered capacity and gain, and to compare and optimize
the performance of batteries.

\section{Linear Battery Models}

\subsection{Nominal, theoretical and available capacity}
\label{seq:nomcap}
We consider a primary (non-rechargeable) battery cell consisting of two
electrodes, anode and cathode, linked by an electrolyte. The cell
contains a certain amount of chemically reactive material which is
converted into electrical energy by an oxidation reaction at the
anode. Primary lithium batteries have a lithium anode and may have
soluble or solid electrolytes and cathodes.  The mass of material
involved in the battery reaction yields a higher concentration of
electrons at the anode, and hence by Faraday's first law the transfer
of a proportional quantity of electrical charge.  This determines a
terminal voltage between the pair of electrodes. By closing a wired
circuit between the terminals a current of electrons will start moving
through the wire from anode to cathode where they react with a
positively charged reactant, manifesting the ability of the battery to
drive electric current.  The intensity of the current depends on the
total resistance along the wire.  Inside the battery the movement of
charge-carriers forms a corresponding ionic current, which is
controlled by a variety of mechanisms, among them migration of ions,
diffusion of reactant species, and charge-transfer reaction.
Migration is generated by an electric potential gradient (electric
field) and convective diffusion by the concentration
gradient. Conductivity arises from the combination of migration and
diffusion.  Charge-transfer reactions take place when migrating ions
are transferred from the electrolyte bulk through the anode surface.
 
It is sometimes helpful to keep track of proper units.  The battery
has a given voltage $E_0$ in volts [V] and a theoretical capacity $T$
in ampere hours [Ah], representing the entire storage of chemically
reactive material in the cell. We write $N$ [Ah] for the nominal
capacity of a fully loaded cell, where $N\le T$. This is
  the amount of electric charge which is delivered if the cell is put
  under constant, high load and drained until a predefined cut-off
  voltage is reached.  Measuring time $t$ in hours [h] we write
$\Lambda(t)$ for the consumed capacity [Ah] and $v(t)=T-\Lambda(t)$
for the remaining capacity [Ah], at time $t$. Here,
$\Lambda=(\Lambda(t))_{t\ge 0}$ is an increasing function, typically
continuous with the slope representing intensity of the current. It is
also plausible to let the discharge function have jumps to be
interpreted as spikes of charge units being released point-wise to the
device driven by the battery.  As an additional level of generality it
is straightforward to consider $\Lambda$ defined on a probability
space and subject to a suitable distributional law of a random
process.  The instantaneous discharge current [A] at $t$ is the
derivative $\Lambda'(t)=\lim_{h\to }h^{-1}\Lambda(t+h)$ and the
average discharge current [A] is the quantity
$\bar\lambda=\lim_{t\to\infty}t^{-1}\Lambda(t)$, assuming this limit
exists.

We are interested in the behavior of the battery cell when 
exposed to the accumulated discharge process $\Lambda$, in
particular regarding
\begin{align*}
u(t)&=\mbox{available capacity [Ah] at time $t$}, \quad u(0)=N\\[1mm]
\widetilde u(t)&=u(t)/N= \mbox{state of charge at time $t$},\quad
\widetilde u(0)=1\\[1mm]
E(t)&=\mbox{voltage [V] at time $t$},\quad E(0)=E_0. 
\end{align*}
While there is no obvious method of observing state-of-charge
empirically, voltage is accessible to measurements at least in
principle.  To describe typical voltage, we imagine that a fully
charged battery at time $t=0$ is connected to a closed circuit at
constant discharge current $\delta=\Lambda'(t)|_{t=0}$ [A].  The
result is an instant voltage drop from $U_0$ to a new level at
approximate voltage $U_0-\delta r$, where $r$ is an internal
resistance [ohm] of the cell. As long as charge is consumed, the
state-of-charge will then begin to decline over time accompanied by a
subsequent change of voltage. If after a period of discharge the
current is disconnected and the battery temporarily put to rest, then
the voltage increases. First, instantly, by the amount $\delta r$ and
then over the course of the off-period at some rate due to recovery
effects inside the cell.  The resulting voltage versus time curve
extended over a longer time span would typically stay nearly constant
or exhibit slow decline over most of the active life of the battery
followed by a steeper decent until a cut-off level $E_\mathrm{cut}$ is
reached, beyond which the cell is considered to be non-operational.
To relate state of charge and voltage we recall that the actual load a
time $t$ is given by $\Lambda'(t)$ and apply the equilibrium Nernst
equation, \cite{newman2004} Ch.\ 1, to obtain
\begin{equation}\label{voltagecapacity}
E(t)=E_0-\Lambda'(t)\,r+K_e \ln(\widetilde u(t)),\quad K_e=\frac{RT_a}{zF}, 
\end{equation}
where $R$ is the ideal gas constant, $T_a$ is absolute temperature,
$z$ is the valency of the battery reactant ($z=1$ for Lithium), $F$ is
Faraday's constant, and dimensions are such that $K_e$ is measured in
volts. The internal resistance, however, may have a more complex
origin arising from a series of resistances in the electrodes and
electrolyte. Electrical circuit models use equivalent
  electrical circuits to capture such current-voltage
  characteristics. In this direction, \cite{kimqiao2011} consider a
  hybrid model of kinetic battery and electrical circuit models to
  fascilitate the derivation of voltage in terms of exponential and
  polynomial functions of state of charge. Building on this approach,
  \cite{roh2013} fits empirical battery data in order to evaluate
  battery models for wireless sensor network applications.

\subsection{Internal charge recovery}

The general principle for charge recovery is that of balancing the
discharge rate by a positive drift of the available
  capacity due to the release and transport of stored charges.  Such
effects should exist as long as the theoretical capacity of the cell
has not yet been fully consumed, that is $v(t)\ge 0$. The
first recovery mechanism to take into account is (solid-state)
diffusion of charge carriers caused by the build-up of a concentration
gradient in the electrolyte during discharge. The drift of the process
is convective flow and a diffusion coefficient controls random
variations around the main direction of transport.  Diffusion
transport of charge carriers might be a slow process which persists
even if the load is removed and the battery put to rest, and runs
until charge concentrations have reached local equilibrium.  Another
mechanism for gaining capacity due to recovery is migration of
charge-carriers caused by the electric field, as an action of a
potential gradient. The strength of this effect should increase with
the gap $N-u(t)$ between maximal and actual capacity.  It appears
reasonable to assume that the effect of migration is ongoing whether
the battery is under load or at rest. The final aspect of recovery we
wish to include in the modeling scenario is charge transfer, meaning
the transfer of charges from electrolyte through an interface to the
terminal electrode.  A simplified approach for this effect is that of
a friction mechanism, such that a fraction of recovered charges are
actually effectuated proportional to the applied load current, either
instantaneous current or average current over long time.

\subsection{Discharge profiles}\label{sec:dischargeprofile}  

The battery models we study are introduced in relation to an arbitrary
accumulated discharge function $\Lambda$.  To engage in a more
detailed analysis of battery performance we consider three stylized
examples of $\Lambda$, which represent typical discharge patterns for
the intended usage of the battery.

\noindent
{\it Constant current.} The first such pattern is that
of draining the battery at a constant current $\lambda$ which remains
the same over the entire battery life until the cell is emptied. Clearly,
$\Lambda(t)=\lambda t$ and $\bar\lambda=\lambda$.  

\noindent
{\it Deterministic on-off pattern.} The second example is relevant for
the case when we know in advance both the amount of work the battery
is supposed to power and the scheduled timing of loads. For such cases
we consider a deterministic pulse-train which consists of a periodic
sequence of cycles of equal length.  Each cycle begins with an active
on-period during which a pulse of constant load is transmitted,
followed by an off-period of rest and no load.  Specifically we assume
that a current $\delta$ [A] is drawn continuously during each
on-period of length $\tau_\mathrm{on}$ followed by a dormant
off-period of length $\tau_\mathrm{off}$.  Hence the cycle duration is
$\tau=\tau_\mathrm{on}+\tau_\mathrm{off}$ and the dutycycle is given
by the fraction $q=\tau_\mathrm{on}/\tau$.  We introduce
\[
J_t=\sum_{j=0}^\infty 1_{\{j\tau\le
  t<j\tau+\tau_\mathrm{on}\}},\quad t\ge 0,
\]
so that $J_t=1$ if $t$ belongs to an on-period and $J_t=0$ for $t$ in
an off-period.  Then the consumed capacity is 
\[
\Lambda(t)=\delta \int_0^t J_s\,ds,\quad t\ge 0,
\]
and $(\Lambda(t))$ is a piecewise continuous function
with non-decreasing rate $\Lambda(dt)=\delta J_t\,dt$. 
The average discharge rate equals $\bar\lambda= \delta q$.

\noindent
{\it Random discharge pattern.}
Our third stylized example of discharge mechanisms represents the 
case where no information except average load is available in advance
of battery operation.  In this situation we consider completely random
discharge with the load to be drawn from the battery per time unit
scattered independently and uniformly random in the sense of the
Poisson process.  Here we take $\Lambda(t)=\delta \tau_\mathrm{on}
N^{(1/\tau)}_t$, where $(N_t^{(\lambda)})_{t\ge 0}$ denotes a standard
Poisson process on the half line with constant intensity $\lambda>0$.
This amounts to saying that the battery is drained from energy in
small jumps of charge $\delta \tau_\mathrm{on}$ which occur interspaced by
independent and exponentially distributed waiting times with expected
value $\tau$. Again the average discharge current is $\bar\lambda=
\ell\tau_\mathrm{on}/\tau=\delta q$.

\subsection{Kinetic battery model}

The Kinetic Battery Model, originally introduced for lead acid
batteries in \cite{man1993,man1994}, takes the view that remaining
capacity of the battery is split in two wells, or compartments, one
representing available charge and the other bound
charge. Discharge is the consumption of available charge
and charge recovery is the flow of matter from the bound well to the
available one.  The available capacity $u(t)$ in this
model is precisely the amount of charge in the available well as
function of time. Hence we call $y(t)=v(t)-u(t)$, the bound charge and
decompose remaining capacity as $v(t)=u(t)+y(t)$, $u(0)=N$,
$y(0)=T-N$.  With the use of the fraction $c=N/T$, $0<c<1$, the two
wells are assigned a measure of height given by $u(t)/c$ and
$y(t)/(1-c)$.  The principle of the kinetic battery model is that
bound charge becomes available at a rate which is proportional to the
height difference $y(t)/(1-c)-u(t)/c$.  Once available, no charges
return to the bound state. Thus,
\begin{equation}
\left\{
\begin{array}{lll}
du(t)
&=-\Lambda(dt)+k\Big(\frac{y(t)}{1-c}-\frac{u(t)}{c}\Big)\,dt,\quad &u(0)=N\\[2mm]
dy(t) &=-k\Big(\frac{y(t)}{1-c}-\frac{u(t)}{c}\Big)\,dt,\quad &y(0)=T-N,
\end{array}
\right. \label{kibameqnsyst}
\end{equation}
where $k>0$ is a reaction parameter.   By assumption, 
$v(t)=u(t)+y(t)=T-\Lambda(t)$, $t\ge 0$. It is convenient therefore to consider
the pair $(v(t),u(t))$. With $k_c=k/c(1-c)$ as an
alternative parameter, 
\begin{equation}\label{kibameqn}
du(t)=-\Lambda(dt)+k_c(cv(t)-u(t))\,dt,\quad u(0)=N.
\end{equation}
In (\ref{kibameqn}),  charge recovery as represented by the factor
$cv(t)-u(t)=N-u(t)-c\Lambda(t)$ may be seen as the combination of
a migration effect due to the term $N-u(t)$ together with
(negative) drift $-c\Lambda(t)$. To clarify these connections, Ref.\
\cite{kajkon2012} studied a reweighted version of the model.  In
this paper we wish to develop these ideas further and hence consider
the closely related reweighted model
\begin{equation}\label{kibameqnplus}
du(t)=-\Lambda(dt)+k_c(cv(t)-u(t)+p(N-u(t))\,dt,\quad u(0)=N,
\end{equation}
where the parameter $p$ controls charge recovery due to
migration. The case $p\ge 0$ reflects migration of
  additional charge from the bound well adding to the internal charge
  recovery. The case $p\le 0$ represents loss of recovery due to
  migration.
The linear system (\ref{kibameqnplus}) is readily solved as
\begin{equation}\label{kibameqnplussol}
u(t)=N- c \Lambda(t)-(1-c)\int_0^t e^{-k_c(1+p)(t-s)}\,\Lambda(ds). 
\end{equation}
For later reference we note that the relevant version of
(\ref{kibameqnsyst}) for this extended case is 
\begin{equation} \label{kibameqnsystplus}
\left\{
\begin{array}{lll}
du(t)
&=-\Lambda(dt)+k_c(cy(t)-(1-c)u(t)+p(N-u(t)))\,dt, \\[2mm]
dy(t) &=-k_c(cy(t)-(1-c)u(t)+p(N-u(t)))\,dt.
\end{array}
\right. 
\end{equation}
We emphasize that the discharge profile $\Lambda(t)$ is arbitrary for
this version of the kinetic battery model. For example, with the random
discharge pattern discussed in section \ref{sec:dischargeprofile} the
integral in (\ref{kibameqnplussol}) is a stochastic
Poisson integral and the the corresponding solution is a
  random process $U(t)$, 
which may be written
\[
U(t)=N- \delta\tau_\mathrm{on}\sum_{s_i\le t} 
(c+(1-c)e^{-k_c (1+p)(t-s_i)}), 
\]
where the sum extends over all jumps $s_i$ in $[0,t]$ of a Poisson
process with intensity $1/\tau$.

Of course, the kinetic battery model could only give a crude
indication of the processes behind real battery behavior.  
In contrast, the diffusion battery model of Rakhmatov
  and Vrudhula introduced in \cite{vrud2001} is an approach closer to
electrochemical modeling, detailing reaction kinetics and internal
transport mechanisms.  Both types of models are relevant for
engineering battery performance.  A significant step towards
unifying these approaches is the extension to spatial
versions of the kinetic battery model, where charges move inside of a
reservoir of bound charge according to the same local dynamics as the
simplest case above.

\subsection{Spatial kinetic battery model}

The diffusion battery model \cite{vrud2001,rak2003},
views battery operation as the diffusion of ions between two
electrodes separated by an electrolyte consisting of a linear region
  $[0,\ell]$ of the one-dimensional line. Initially, the electroactive
  species are uniformly distributed over space. Over time, the
  concentration $C(t,x)$ of electroactive species at time $t\ge 0$ and
  distance $x$ from the electrode at $x=0$ develop following Fick's
  laws of diffusion with suitable boundary conditions at both
  electrode endpoints $x=0$ and $x=\ell$.  In
  \cite{jon2009}, comparing kinetic and diffusion battery models,
  Jongerden and Haverkort introduced the idea of placing a finite
number of charge compartments in series along the spatial range and
letting charges move between adjacent components according to Eq.\
(\ref{kibameqnsyst}).  Discharge occurs at the anode which is located
in one end point of the spatial interval. Starting from a state of
fully charged compartments a spatial charge profile develops over time
and determines the pace at which the battery is drained. 
  This system is shown to match the diffusion model when the diffusion
  equations for $C(t,x)$ are discretized over an equally sized
  partition of $[0,\ell]$. In this sense the diffusion model is a
  continuous version of the kinetic battery model.
In this section we present further developments of this theory,
building on previous work in Ref.\ \cite{kajkon2012}.  Indeed, we
analyze the effects of charge recovery in the spatial setting and
provide in explicit form the available charge and other
performance measures in terms of basic parameters of diffusion and
migration.

We begin by considering a battery cell consisting of $m$ adjacent
fluid compartments and a function $u(t)=(u_1(t),\dots,u_m(t))$ which
gives the charge content in each component over time.  Here $u_1$ is
the available charge, $u_2$ is a bound well charge for $u_1$ and so on
until $u_m$, which is a bound well charge for $u_{m-1}$.  By letting
Eqn.\ (\ref{kibameqnsyst}) act pairwise on adjacent compartments, we
obtain the coupled system of linear equations
\[
\left\{
\begin{array}{ll}
du_1(t)=-\Lambda(dt)+k_c(cu_2(t)-(1-c)u_1(t))\,dt\\
du_2(t)=-k_c (cu_2(t)-(1-c)u_1(t))\,dt+k_c (cu_3(t)-(1-c)u_2(t))\,dt\\
\qquad \vdots\\
du_{m-1}(t)=-k_c
(cu_{m-1}(t)-(1-c)u_{m-2}(t))\,dt \\
\hskip 3.5 cm +k_c(cu_m(t)-(1-c)u_{m-1}(t))\,dt\\
du_m(t)=-k_c(cu_m(t)-(1-c)u_{m-1}(t))\,dt.
\end{array} \right.
\]
In greater generality, the reweighted version
(\ref{kibameqnsystplus}) yields
\[
\left\{
\begin{array}{ll}
du_1(t)=-\Lambda(dt)+k_c(cu_2(t)-(1-c)u_1(t))\,dt+
k_cp(N-u_1(t))\,dt\\
du_2(t)=-k_c(cu_2(t)-(1-c)u_1(t))\,dt+k_c
(cu_3(t)-(1-c)u_2(t))\,dt\\
\hskip 3.5 cm -k_cp(u_2(t)-u_1(t))\,dt\\
\qquad \vdots\\
du_{m-1}(t)=-k_c
(cu_{m-1}(t)-(1-c)u_{m-2}(t))\,dt \\
\hskip 2 cm +k_c(cu_m(t)-(1-c)u_{m-1}(t))\,dt-k_cp(u_{m-1}(t)-u_{m-2}(t))\,dt\\
du_m(t)=-k_c(cu_m(t)-(1-c)u_{m-1}(t))\,dt-k_cp(N-u_{m-1}(t))\,dt.
\end{array} \right.
\]
To see more clearly the structure in this system of equations, we
introduce the parameter $\mu_c=2c-1$ and employ the notations 
\[
\nabla u_k(t)=u_{k+1}(t)-u_k(t),\quad 
\Delta u_k(t)=u_{k-1}(t)-2u_k(t)+u_{k+1}(t).
\]
Then
\[
\left\{
\begin{array}{lc}
du_1(t)=-\Lambda(dt)+k_c\Big(\frac{1}{2}\nabla u_1(t)
  +\mu_c(u_1(t)+u_2(t))/2
+p(N-u_1(t))\Big)\,dt \\
du_2(t)=k_c\Big(\frac{1}{2} \Delta u_2(t)
+\mu_c(\nabla u_2(t)+\nabla u_1(t))/2
-p\nabla u_1(t)\Big)\,dt\\
\qquad \vdots\\
du_{m-1}(t)=k_c\Big(\frac{1}{2}\Delta u_{m-1}(t)
+\mu_c(\nabla u_{m-1}(t)+\nabla u_{m-2}(t))/2
 -p\nabla u_{m-2}(t)\Big)\,dt\\
du_m(t)=-k_c\Big(\frac{1}{2}\nabla u_{m-1}(t)
+\mu_c(u_{m-1}(t)+u_m(t))/2+p(N-u_{m-1}(t))\Big)\,dt.
\end{array} \right.
\]
Here, the parameter $p$ controls drift originating from
the terms proportional to $N-u(t)$ in (\ref{kibameqnsystplus}). In
particular, the nominal capacity $N$ remains as a parameter in
the boundary equations for $u_1$ and $u_m$.

\subsection{Limiting PDE, continuous space}

Our next goal is to identify a limiting partial differential equation
for the charge concentration profile $u(t)$ in the limit $m\to\infty$
as the size of the charge compartments tends to zero and the number of
wells goes to infinity.  The case $p=0$ is studied in Ref.\
\cite{kajkon2012} and we will use a similar method for the general
setting. 

Let $\ell=(T-N)/N$ and consider the strip $0\le x\le \ell$ partitioned
in $m$ equal intervals of length $\varepsilon=\ell/m$.  For $x=j\varepsilon$,
$j=1,\dots,m$, we define $u_\varepsilon(t,x)=u_j(t)$ and note that 
\[
\nabla
u_\varepsilon(t,x)=u_\varepsilon(t,x+\varepsilon)-u_\varepsilon(t,x)
\]
and
\[
\Delta
u_\varepsilon(t,x)=u_\varepsilon(t,x-\varepsilon)-2u_\varepsilon(t,x)+u_\varepsilon(t,x+\varepsilon). 
\]
To match spatial and temporal scaling we introduce the scaled parameters
$\kappa=k/m^2$, $\kappa_c=\kappa/c(1-c)$. The relations
derived above for $u_j$, $2\le j\le m-2$ imply, for $x\in
\{2/m,\dots,(\ell-1)/m\}$,
\begin{equation*}
du_\varepsilon(t,x)=\kappa_c \Big(\ell^2\frac{1}{2}
\frac{\Delta u_\varepsilon(t,x)}{\varepsilon^2}
+\ell B_\varepsilon u_\varepsilon(t,x)\Big)\,dt
\end{equation*}
where $B_\varepsilon$ is the linear drift operator
\begin{equation*}
B_\varepsilon u_\varepsilon(t,x)=m\mu_c
\frac{\nabla u_\varepsilon(t,x)+\nabla u_\varepsilon(t,x-\varepsilon)}{2\varepsilon}
-mp\frac{\nabla u_\varepsilon(t,x-\varepsilon)}{\varepsilon}. 
\end{equation*}
The additional relations for $u_1$ and $u_m$ correspond to 
boundary equations for $u_\varepsilon$, which attain the form
\begin{align*}
&\frac{du_\varepsilon(t,\varepsilon)}{m}
= -\frac{\Lambda(dt)}{m}  
+\frac{\kappa_c}{2} \left\{\ell\, 
\frac{\nabla u_\varepsilon(t,\varepsilon)}{\varepsilon}
+m\mu_c(u_\varepsilon(t,\varepsilon)+u_\varepsilon(t,2\varepsilon))
+2mp(N-u_\varepsilon(t,\varepsilon)) \right\}dt
\end{align*}
and 
\begin{align*}
&\frac{du_\varepsilon(t,\ell)}{m}= 
-\frac{\kappa_c}{2} \left\{\ell\, 
\frac{\nabla u_\varepsilon(t,\ell-\varepsilon)}{\varepsilon}
+m\mu_c(u_\varepsilon(t,\ell-\varepsilon) +u_\varepsilon(t,\ell)) 
+2mp(N-u_\varepsilon(t,\ell-\varepsilon))\right\}dt
\end{align*}
Based on the above relations for the system of $m$ compartments one
can see that in order to balance all terms in the limit $m\to\infty$,
it is natural to introduce two drift parameters $\mu$ and $\rho$, both
of arbitrary sign, and replace $c$ by $c_m=(1+\mu/m)/2$ and $p$ by
$p_m=\rho/m$. Then, for large $m$,
\[
c_m\sim 1/2,\quad \kappa_{c_m}\sim 4\kappa, \quad m\mu_{c_m}\sim
\mu,\quad mp_m\sim \rho,
\]
where $\kappa>0$ is a reaction parameter, $\mu$ a diffusion parameter
and $\rho$ a migration parameter.
This gives the approximate system
\begin{eqnarray*}
du_\varepsilon(t,x)=
-\Lambda(dt)\delta_\varepsilon(dx)+2\kappa\ell^2 \frac{\Delta
u_\varepsilon(t,x)}{\varepsilon^2}\,dt
+4\kappa\ell(\mu-\rho)\frac{\nabla u_\varepsilon(t,x)}{\varepsilon}\,dt
\end{eqnarray*}
with Robin type boundary conditions
\[
\ell\, 
\frac{\nabla u_\varepsilon(t,\varepsilon)}{\varepsilon}
+2\mu u_\varepsilon(t,\varepsilon)
+2\rho(N-u_\varepsilon(t,\varepsilon))=0
\]
and
\[ 
\ell\,
\frac{\nabla u_\varepsilon(t,\ell-\varepsilon)}{\varepsilon}
+2\mu u_\varepsilon(t,\ell) 
+2\rho(N-u_\varepsilon(t,\ell))=0.
\]
We conclude that the relevant limiting equation in the limit
$\varepsilon=1/m\to 0$, is defined on $0\le x\le \ell~$ by 
\begin{eqnarray}\nonumber
&&du(t,x)=-\Lambda(dt)\delta_0(dx) +2\kappa\ell^2
\frac{\partial^2 u}{\partial x^2}(t,x)\,dt
+4\kappa\ell(\mu-\rho) \frac{\partial u}{\partial x}(t,x)\,dt,
\\\nonumber
&& \quad \ell\, \frac{\partial u}{\partial x}(t,0+)=-2\mu u(t,0)
-2\rho(N-u(t,0))\\
&&\quad \ell\, \frac{\partial u}{\partial x}(t,\ell-)
=2\mu u(t,\ell)+2\rho(N-u(t,\ell)),\,\qquad
u(0,x)=u_0(x). 
\label{pdefinal}
\end{eqnarray}
Here, $u(t,0)_{t\ge 0}$ is the charge density at the boundary of the
battery and $\{u(t,x),0<x<\ell\}_{t\ge 0}$ is the fluid level of a
reservoir of bound charge such that $\int_{(0,\ell)} u(t,x)\,dx$ is
what remains in the reservoir at time $t$.

\subsection{Probabilistic solution}

To state a probabilistic representation of the solution to
(\ref{pdefinal}), let $(B_t)_{t\ge 0}$ denote Brownian
motion with variance parameter $4\kappa\ell^2$ and constant drift
$-4\kappa\ell\delta$.  The parameter $\delta$ in the drift of the
Brownian motion corresponds to $\delta=\mu-\rho$ in
(\ref{pdefinal}).  
We assume that $(B_t)$ is confined to the interval $(0,\ell)$ and
subject to reflecting boundaries at both end points $0$ and $\ell$.
Let $p_{\ell,\delta}(t,y,x)$ be the transition density of $(B_t)$ so
that $P(B_t\in dx|\xi_0=y)=p_\ell(t,y,x)\,dx$.  We will use a
spectral type representation for $p_{\ell,\delta}(t,y,x)$ known to be
\begin{eqnarray}\label{transitionprob}
&&p_{\ell,\delta}(t,y,x)=\frac{2\delta}{\ell}\frac{e^{-2\delta
      x/\ell}}{1-e^{-2\delta}}
  +\frac{2e^{-\delta(x-y)/\ell}}{\ell}\times\\ &&\sum_{n=1}^\infty
  (\cos(\frac{n\pi x}{\ell})-\frac{\delta}{n\pi}\sin(\frac{n\pi
    x}{\ell})) (\cos(\frac{n\pi
    y}{\ell})-\frac{\delta}{n\pi}\sin(\frac{n\pi y}{\ell}))
  \frac{e^{-2\kappa(\delta^2+n^2\pi^2)t}}{1+(\delta/n\pi)^2}.
\nonumber
\end{eqnarray}
The above expression is derived in Ref.\ \cite{svensson1990} and
discussed and compared with an alternative representations in
Ref.\ \cite{veestraeten2004}.  In particular, for the symmetric case,
letting $\delta\to 0$,
\[
p_{\ell,0}(t,y,x)=\frac{1}{\ell}+\frac{2}{\ell} \sum_{n=1}^\infty \cos(n\pi
x/\ell)\cos(n\pi y/\ell)\, e^{-2\kappa n^2\pi^2 t}.
\]
Asymptotically, for $0\le y\le \ell$ as $t$ tends to infinity,
\begin{equation}\label{homogeneousasymptotic}
\lim_{t\to\infty}p_{\ell,\delta}(t,y,x)=\frac{2\delta}{\ell}\frac{e^{-2\delta
      x/\ell}}{1-e^{-2\delta}},\quad 
\lim_{t\to\infty}p_{\ell,0}(t,y,x)=\frac{1}{\ell},\quad 0\le x\le \ell.
\end{equation}

\begin{theorem}\label{thm:pde}
Suppose that the spatial kinetic battery model with reaction parameter
$\kappa>0$, migration parameter $\rho$, and diffusion parameter
$\mu$ is defined on an interval $\ell=(T-N)/N>0$ where
$N$ is the nominal capacity and $T$ the theoretical
capacity. Let $\delta=\mu-\delta$. A given nonnegative
function $\{u_0(y),\, 0\le y\le \ell\}$, is the initial charge profile
of the battery and $\Lambda(dt)$ is a given discharge pattern.  We
restrict to the range of parameters where the battery model is
physically realized, by assuming that $\rho$, $\mu$, $N$, $\ell$, and
$u_0$ are such that $u_\infty(x)>0$ for $0\le x\le \ell$, where
$u_\infty$ is defined in (\ref{uinftydef}).  Then the bound charge
profile $\{u(t,x),\,0\le x\le \ell,t\ge 0\}$ of the battery, defined
as the solution of the PDE (\ref{pdefinal}), is given for
  the case $\delta\not=0$ by
\begin{equation*}
u(t,x)=\int_0^\ell u_0(y)p_{\ell,\delta}(t,y,x)\,dy
-\int_0^t p_{\ell,\delta}(t-s,0,x)\,\Lambda(ds)
+\frac{\rho N}{\delta}\int_0^\ell
  (p_{\ell,\delta}(t,y,x)-\frac{1}{\ell})\,dy, 
\end{equation*}
and for the case $\delta=0$ by
\begin{align}\nonumber
u(t,x) &=\int_0^\ell u_0(y) p_{\ell,0}(t,y,x)\,dy
-\int_0^t p_{\ell,0}(t-s,0,x)\,\Lambda(ds)\\
&\quad +  \rho N(1-\frac{2x}{\ell})+ 4\rho  
\sum_{n=1}^\infty \cos(\frac{n\pi x}{\ell}) 
   \frac{((-1)^n -1)}{n^2\pi^2}e^{-2\kappa(n^2\pi^2)t}
\label{soldelta0}
\end{align}
where $p_{\ell,\delta}(t,y,x)$ is defined in (\ref{transitionprob}).
\end{theorem}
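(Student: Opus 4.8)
The plan is to treat (\ref{pdefinal}) as a linear parabolic problem with a boundary point-source and inhomogeneous Robin data, and to solve it by superposition using the heat kernel of the reflected diffusion, concluding by uniqueness. The generator of the Brownian motion with variance parameter $4\kappa\ell^2$ and drift $-4\kappa\ell\delta$ is $\mathcal L f = 2\kappa\ell^2 f'' - 4\kappa\ell\delta f'$, so its forward (Fokker--Planck) operator is $\mathcal L^\ast g = 2\kappa\ell^2 g'' + 4\kappa\ell\delta g'$, which is exactly the spatial operator in (\ref{pdefinal}). The first step is therefore to record that, for each fixed $y$, the function $(t,x)\mapsto p_{\ell,\delta}(t,y,x)$ from (\ref{transitionprob}) is the fundamental solution of $\partial_t p=\mathcal L^\ast_x p$ on $(0,\ell)$, subject to the reflecting (zero-flux) boundary conditions $\ell\,\partial_x p+2\delta p=0$ at $x=0$ and $x=\ell$, with $p(0,y,\cdot)=\delta_y$. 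These zero-flux conditions are precisely the homogeneous part of the Robin data in (\ref{pdefinal}), since the current associated with $\mathcal L^\ast$ is $J=-2\kappa\ell^2\partial_x p-4\kappa\ell\delta p$ and $J\equiv 0$ at the endpoints reduces to $\ell\partial_x p+2\delta p=0$; the stationary limit (\ref{homogeneousasymptotic}) visibly satisfies this. I would verify the fundamental-solution property directly from the spectral series, or else invoke \cite{svensson1990,veestraeten2004}.

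Next I would build the solution as $u=I_1+I_2+I_3$, one term for each inhomogeneity, and check that the sum reproduces every line of (\ref{pdefinal}). The initial-data piece $I_1(t,x)=\int_0^\ell u_0(y)\,p_{\ell,\delta}(t,y,x)\,dy$ solves the homogeneous equation with $I_1(0,x)=u_0(x)$, and differentiating under the integral gives $\ell\partial_x I_1+2\delta I_1=\int_0^\ell u_0(y)(\ell\partial_x p+2\delta p)\,dy=0$, so it carries the initial profile and leaves the boundary data homogeneous. The source piece $I_2(t,x)=-\int_0^t p_{\ell,\delta}(t-s,0,x)\,\Lambda(ds)$ is the Duhamel convolution for the forcing $-\Lambda(dt)\delta_0(dx)$: differentiating in $t$, the upper-limit contribution produces $-\delta_0(x)\,\Lambda(dt)$ through $p_{\ell,\delta}(0^+,0,\cdot)=\delta_0$, while the remaining integrand reproduces $\mathcal L^\ast_x I_2$, and again $\ell\partial_x I_2+2\delta I_2=0$. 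Finally, the migration piece $I_3(t,x)=\frac{\rho N}{\delta}\int_0^\ell\big(p_{\ell,\delta}(t,y,x)-\tfrac1\ell\big)\,dy$ solves the homogeneous interior equation (the subtracted constant $\tfrac1\ell$ is annihilated by $\mathcal L^\ast_x$), vanishes at $t=0$ because $\int_0^\ell p_{\ell,\delta}(0,y,x)\,dy=1$, and is engineered so that $\ell\partial_x I_3+2\delta I_3=\frac{\rho N}{\delta}\int_0^\ell(\ell\partial_x p+2\delta p)\,dy-2\delta\cdot\frac{\rho N}{\delta}=-2\rho N$ at each endpoint; this is exactly the inhomogeneous Robin datum coming from the migration terms $2\rho(N-u)$ in (\ref{pdefinal}). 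Summing the three pieces yields the interior equation, the Robin identities at both endpoints, and $u(0,x)=u_0(x)$, and uniqueness for this linear Robin problem (a standard energy estimate or maximum principle) closes the case $\delta\neq 0$.

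For the degenerate case $\delta=0$ I would obtain (\ref{soldelta0}) by passing to the limit $\delta\to0$. Here the prefactor $\rho N/\delta$ multiplies $\int_0^\ell(p_{\ell,\delta}-\tfrac1\ell)\,dy$, which tends to $0$ as $\delta\to0$, so $I_3$ is a genuine $0/0$ limit that I would resolve by expanding $p_{\ell,\delta}$ to first order in $\delta$ in (\ref{transitionprob}); the zeroth-order kernel $p_{\ell,0}$ survives in $I_1$ and $I_2$, while the first-order term produces the explicit polynomial-plus-Fourier-series $\rho N(1-2x/\ell)+\dots$ of (\ref{soldelta0}). Alternatively one checks (\ref{soldelta0}) directly by the same three-step scheme with kernel $p_{\ell,0}$: the extra cosine series is exactly the transient needed so that $I_3$ vanishes at $t=0$, while the boundary derivative keeps the Neumann-type value $\ell\partial_x u=-2\rho N$ (each mode $\cos(n\pi x/\ell)$ has vanishing $x$-derivative at both endpoints).

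I expect the main obstacle to be the first step, namely rigorously certifying that the series (\ref{transitionprob}) is the fundamental solution with the stated no-flux conditions. The drift makes $\mathcal L^\ast$ non-self-adjoint, so the spectral representation rests on a symmetrizing weight (the factors $e^{-\delta(x-y)/\ell}$ and $e^{-2\delta x/\ell}$), and one must check that the modes $\cos(n\pi x/\ell)-\frac{\delta}{n\pi}\sin(n\pi x/\ell)$ satisfy the Robin eigenvalue problem and form a complete system, as well as justify term-by-term differentiation in $t$ and $x$. A secondary difficulty is the boundary point-source: differentiating the Duhamel integral for a general, possibly atomic, $\Lambda$ and giving meaning to the Dirac mass $\delta_0$ sitting exactly on a reflecting endpoint both require care. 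Once the kernel's analytic properties are in hand, the verification of the three identities is routine.
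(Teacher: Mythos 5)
Your proposal is correct and is essentially the paper's own argument: the paper likewise proves the theorem by linear superposition and verification against the reflected-Brownian-motion kernel (citing \cite{kajkon2012} for the initial-data and Duhamel pieces, i.e.\ your $I_1$ and $I_2$, which are unaffected by $\rho$), absorbs the migration inhomogeneity through a constant shift $M=\rho N/(\rho-\mu)$ that is algebraically identical to your $I_3$, since
\[
M+\int_0^\ell\bigl(u_0(y)-M\bigr)p_{\ell,\delta}(t,y,x)\,dy
=\int_0^\ell u_0(y)p_{\ell,\delta}(t,y,x)\,dy
+\frac{\rho N}{\delta}\int_0^\ell\Bigl(p_{\ell,\delta}(t,y,x)-\frac{1}{\ell}\Bigr)\,dy,
\]
and handles $\delta=0$ by the same limit $\delta\to 0$ of this correction term. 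The only remarks worth adding: like the paper's proof, your boundary verification uses the condition $\ell\,\partial_x u+2\delta u=-2\rho N$ at \emph{both} endpoints, which matches the discrete derivation and the kernel's no-flux property but not the sign literally printed at $x=\ell$ in (\ref{pdefinal}) (evidently a typo the paper's own ``similarly'' glosses over), and your explicit uniqueness step and fundamental-solution verification are points the paper leaves implicit.
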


\begin{proof}  The special case of diffusion but no migration, which is the
  PDE (\ref{pdefinal}) with $\rho=0$, that is
\begin{eqnarray}\nonumber
&&du(t,x)=-\Lambda(dt)\delta_0(dx) +2\kappa\ell^2
\frac{\partial^2 u}{\partial x^2}(t,x)\,dt
+4\kappa\ell\mu \frac{\partial u}{\partial x}(t,x)\,dt, \quad 0\le
x\le \ell\\\nonumber
&& \quad \ell\, \frac{\partial u}{\partial x}(t,0+)=-2\mu u(t,0),
\quad \ell\, \frac{\partial u}{\partial x}(t,\ell-)
=2\mu u(t,\ell),\,\quad u(0,x)=u_0(x), 
\label{pdespecialcase1}
\end{eqnarray}
has been studied in Ref.\ \cite{kajkon2012}. The solution is given by 
\begin{equation}\label{eq:pdesolsum}
u(t,x)=
\int_0^\ell u_0(y)p_{\ell,\mu}(t,y,x)\,dy
-\int_0^t p_{\ell,\mu}(t-s,0,x)\,\Lambda(ds).
\end{equation}
To handle the case of a nonzero migration effect, $\rho\not=0$, in
(\ref{pdefinal}) we first observe that the non-homogeneous term in
(\ref{eq:pdesolsum}) which involves $\Lambda(dt)$ remains the
same.  Hence it suffices to discuss the solution of
of (\ref{pdefinal}) for the homogeneous case $\Lambda(dt)\equiv 0$,
which represents a battery at rest without discharge current. We claim
that for any drift parameters $\mu$ and $\rho$, such that
$\delta=\mu-\rho\not=0$, the solution is given by 
\begin{align}\nonumber
u(t,x)&=\frac{\rho N}{\rho-\mu}+ \int_0^\ell \Big(u_0(y)-\frac{\rho
  N}{\rho-\mu}\Big)p_{\ell,\mu-\rho}(t,y,x)\,dy\\
    &=\int_0^\ell u_0(y)p_{\ell,\delta}(t,y,x)\,dy
          +\frac{\rho N}{\delta} 
  \int_0^\ell (p_{\ell,\delta}(t,y,x)-\frac{1}{\ell})\,dy.
\label{pdesolhomogeneous}
\end{align}
Indeed, it is straightforward to verify that for any constant $M$ the function
\[
g(t,x)=M+ \int_0^\ell(u_0(y)-M)p_{\ell,\delta}(t,y,x)\,dy
\]
satisfies the target equation. Moreover,
\begin{align*}
\ell \frac{\partial g}{\partial x}(t,x)|_{x=0}&=2\delta(M-g(t,0))\\
&= -2\mu g(t,0)-2\rho((\rho-\mu)M/\rho-g(t,0)),
\end{align*}
which shows that boundary condition at $x=0$ is satisfied for $M=\rho
N/(\rho-\mu)$. Similarly for the boundary condition at
$x=\ell$. Finally, to cover the case $\delta=0$ one shows
that
\[
\lim_{\delta\to 0}
\frac{\rho N}{\delta}\int_0^\ell
  (p_{\ell,\delta}(t,y,x)-\frac{1}{\ell})\,dy
\]
equals the sum of the two last terms in (\ref{soldelta0}).
\end{proof}

\noindent
{\bf Constant initial charge.} 
Restricting to the special case of constant initial charge $u_0\equiv
N$, the solution for $\delta\not=0$ is
\begin{align*}
u(t,x)&=\Big(\frac{\mu N}{\delta}-\frac{1}{\ell}\Lambda(t)\Big)
\frac{2\delta \,e^{-2\delta x/\ell}}{1-e^{-2\delta}} -\frac{\rho N}{\delta}\\  
&\quad  + 4\mu N e^{-\delta x/\ell} 
\sum_{n=1}^\infty (\cos(\frac{n\pi x}{\ell})-\frac{\delta}{n\pi}\sin(\frac{n\pi
x}{\ell})) \frac{((-1)^n e^\delta-1)n^2\pi^2}{(\delta^2+n^2\pi^2)^2}
e^{-2\kappa(\delta^2+n^2\pi^2)t}\\
&\quad -\frac{2e^{-\delta x/\ell}}{\ell}
\sum_{n=1}^\infty (\cos(\frac{n\pi x}{\ell})-\frac{\delta}{n\pi}\sin(\frac{n\pi
x}{\ell})) \frac{n^2\pi^2}{\delta^2+n^2\pi^2}\int_0^t
e^{-2\kappa(\delta^2+n^2\pi^2)(t-s)}\,\Lambda(ds)
\end{align*}
and for $\delta=0$, 
\begin{align*}
u(t,x)&=N\Big(1+\rho-\frac{2\rho x}{\ell}\Big)  
-\frac{1}{\ell}\Lambda(t)
+ 4\mu N  \sum_{n=1}^\infty \cos(\frac{n\pi x}{\ell}) 
\frac{((-1)^n-1)}{n^2\pi^2} e^{-2\kappa n^2\pi^2\, t}\\
&\qquad -\frac{2}{\ell}
\sum_{n=1}^\infty \cos(\frac{n\pi x}{\ell})\int_0^t
e^{-2\kappa n^2\pi^2(t-s)}\,\Lambda(ds)
\end{align*}

\noindent
{\bf Asymptotic charge profile.}  For a battery at rest, so that
$\Lambda(t)\equiv 0$, we have by (\ref{homogeneousasymptotic}) and
(\ref{pdesolhomogeneous}) that $u_\infty(x)=\lim_{t\to\infty}u(t,x)$ is
given by
\begin{equation}\label{uinftydef}
u_\infty(x)=\frac{1}{\ell}\int_0^\ell u_0(y)\,dy\; 
\frac{2\delta\,e^{-2\delta x/\ell}}{1-e^{-2\delta}}
+\frac{\rho N}{\delta} \Big(\frac{2\delta\,e^{-2\delta
    x/\ell}}{1-e^{-2\delta}}-1\Big),\quad 0\le x\le\ell. 
\end{equation}
In particular, for $u_0(y)=N$, $\rho=0$ yields the truncated
exponential function
\[
u_\infty(x)=N \frac{2\mu e^{-2\mu x/\ell}}{1-e^{-2\mu}},
\quad 0\le x\le\ell,
\]
and $\mu=0$ the trivial asymptotic solution $u_\infty(x)=N$, $0\le
x\le\ell$.

\medskip
\noindent
{\bf Non-homogeneous case, migration but no diffusion}.  The system
(\ref{pdefinal}) for the case $\mu=0$ and arbitrary $\rho$, where
we also restrict to the initial condition $u_0(x)=N$, $0\le x\le
\ell$, takes the form
\begin{eqnarray}\nonumber
&&du(t,x)=-\Lambda(dt)\delta_0(dx) +2\kappa\ell^2
\frac{\partial^2 u}{\partial x^2}(t,x)\,dt
-4\kappa\ell\rho \frac{\partial u}{\partial x}(t,x)\,dt, \quad 0\le
x\le \ell\\\nonumber
&& \quad \ell\, \frac{\partial u}{\partial x}(t,0+)=
-2\rho(N-u(t,0))\\
&&\quad \ell\, \frac{\partial u}{\partial x}(t,\ell-)
=2\rho(N-u(t,\ell)),\,\qquad
u(0,x)=N. 
\label{pdespecialcase2}
\end{eqnarray}
The solution is
\[
u(t,x)=N-\int_0^t p_{\ell,-\rho}(t-s,0,x)\,\Lambda(ds).
\]

\medskip
\noindent
{\bf Varying the discharge profile}.  Of course, the result in
Theorem \ref{thm:pde} for the capacity reservoir $u(t,x)$ will be
obtained in a more or less explicit form depending on which discharge
profile $\Lambda$ applies.  But, in principle, the results of
Theorem \ref{thm:pde} allow comparison of the capacity dynamics under
the discharge patterns discussed in
Section \ref{sec:dischargeprofile}, and others.  For example, a family
of deterministic on-off patterns with given parameters may be compared
to the reference case of constant current discharge.  If the discharge
pattern is Poisson or otherwise random then the response in capacity
and voltage drop will be random as well.  The next subsection is
concerned with the case of constant discharge.

\subsection{Autonomous system}

The results in this subsection applies to the case of constant
current $\Lambda(t)=\lambda t$.  For simplicity we assume constant
initial capacity $u_0=N$. Then the available capacity
predicted by the spatial kinetic battery model according to Theorem
2.1 is
\begin{align}\nonumber
u(t,0)&=\frac{\mu N}{\delta}\,\omega_\delta 
-\frac{\rho N}{\delta}
+ 4\mu N  
\sum_{n=1}^\infty \frac{((-1)^n e^\delta-1)n^2\pi^2}{(\delta^2+n^2\pi^2)^2}
e^{-2\kappa(\delta^2+n^2\pi^2)t}\\
&\quad- \omega_\delta 
\,\frac{\lambda t}{\ell}
-\frac{\lambda}{\kappa\ell}
\sum_{n=1}^\infty  \frac{n^2\pi^2}{(\delta^2+n^2\pi^2)^2}
(1-e^{-2\kappa(\delta^2+n^2\pi^2)t}),\quad 
\omega_\delta=\frac{2\delta}{1-e^{-2\delta}}, 
\label{nomcapacityspatial} 
\end{align}
and the corresponding state of charge is $\wt u(t,0)=u(t,0)/N$.
It is illustrative to consider the available capacity
as a function of the remaining capacity stored in the
battery. This point of view is explicit in the simple kinetic battery model 
under constant current, namely if we consider $(v(t),u(t))$ with
$v(t)=T-\lambda t$ and 
\[
u(t)=N-c\lambda t-\lambda(1-c)(1-e^{-k_c(1+p)t})/k_c,\quad c=N/T,
\]
which is the solution of (\ref{kibameqnplussol}) for the case
$\Lambda(t)=\lambda t$.  Then $(v,u)$ with $u=u(v)$  given by  
\[
u=cv-\gamma_{c,p}(1-c)(1-e^{-(T-v)/\gamma_{c,p}}),\quad 
                \gamma_{c,p}=\frac{\lambda}{k_c (1+p)},
\]
is an autonomous system, and we may express functionals of $u$, such
as voltage $E=E(u)$, in terms of $v$.  In the same spirit we seek to
express $u(t,0)$ for the spatial kinetic battery model as a function
of remaining capacity.  In our model the relevant remaining capacity
function is what is stored in the entire reservoir of size $\ell$ at
time $t$, namely
\[
v(t)=u(t,0)+\int_0^\ell u(t,x)\,dx=u(t,0)+N\ell-\lambda t.
\]
By replacing $\lambda t$ in (\ref{nomcapacityspatial}) with
$u(t,0)-v(t)+N\ell$,
\begin{align}\nonumber
u(t,0)&= 
(v(t)-u(t,0))\frac{\omega_\delta}{\ell}
+\frac{\rho N}{\delta}(\omega_\delta-1) \\\nonumber
&\quad + 4\mu N \sum_{n=1}^\infty \frac{((-1)^n
  e^\delta-1)n^2\pi^2}{(\delta^2+n^2\pi^2)^2} 
e^{-2\kappa(\delta^2+n^2\pi^2)(u(t,0)-v(t)+N\ell)/\lambda}\\
&\quad-\frac{\lambda}{\kappa\ell}
\sum_{n=1}^\infty  \frac{n^2\pi^2}{(\delta^2+n^2\pi^2)^2}
(1-e^{-2\kappa(\delta^2+n^2\pi^2)(u(t,0)-v(t)+N\ell)/\lambda}),
\label{phaseplanespatial}
\end{align}
which is an autonomous system for the pair $(v,u)=(v(t),u(t,0))$.
Figure \ref{fig:phaseplanespatial} indicates the typical shape of
solution curves $(v,u)$ for fixed, but arbitrary, parameter values
$T=1000$, $N=100$, $\ell=(T-N)/N=9$, $\lambda=1000$, and $\kappa=0.5$,
and for varying $\rho$ and $\mu$.  The diagonal line from
  $(0,0)$ to $(T,N)$ in the $(v,u)$ phaseplane represents an idealized situation
  where all theretical capacity of the battery is consumed at a steady
  pace. The additional straight line from $(T-N,0)$ to $(T,N)$
  represents a battery which is drained at rate $\lambda$
  without any recovery of bound charge at all.
\begin{figure}[htb]
\centerline{\hbox{\epsfxsize=15.0 truecm
            \epsffile{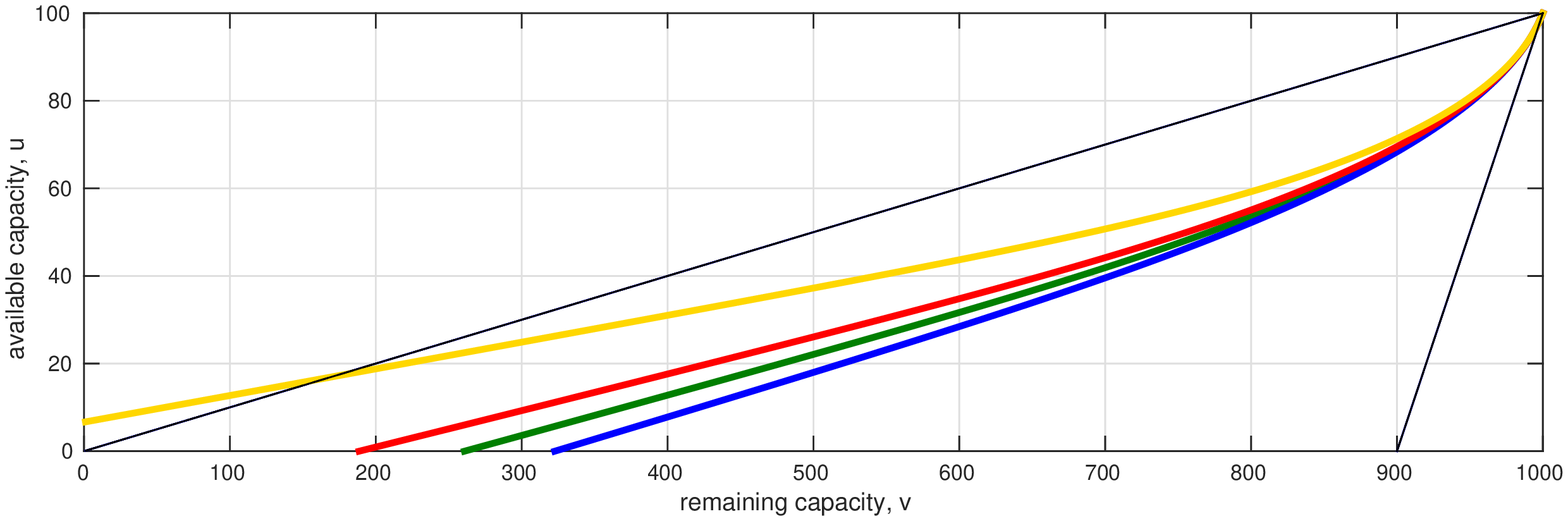}}}
\centerline{\hbox{\epsfxsize=15.0 truecm
            \epsffile{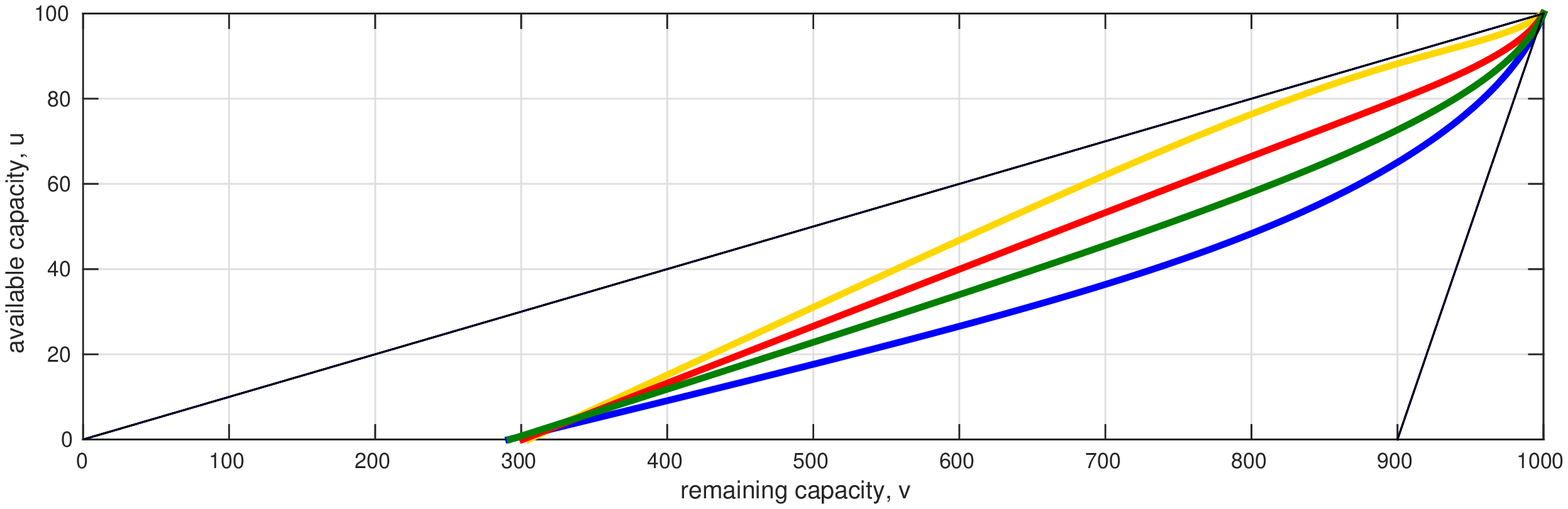}}}
\caption{Solution curves of (\ref{phaseplanespatial}) for $T=1000$,
  $N=100$, $\lambda=1000$, $\kappa=0.5$.  In the upper panel, the
  diffusion parameter is $\mu=0$ and the migration parameter varies
  from $\rho=0$ (blue curve), $\rho=0.1$ (green curve), $\rho=0.2$
  (red curve), $\rho=0.5$ (yellow curve).  The lower panel shows four
  different combinations of $\mu$ and $\rho$ which all give
  approximately the same utilization of the total available battery
  capacity, namely $\rho=0.1$, $\mu=-0.1$ (blue curve), $\rho=0$,
  $\mu=0.1$ (green curve), $\rho=-0.1$, $\mu=0.25$ (red curve), and
  $\rho=-0.2$, $\mu=0.4$ (yellow curve).}
\label{fig:phaseplanespatial}
\end{figure}

\section{Nonlinear Battery Models}

In this section we introduce an approach to nonlinear battery modeling
using a discrete time Markov chain on a
set of bivariate states representing available and remaining
capacities.  The chain has nonlinear jump probabilities which we
arrive at by analyzing a simplified transport system of charges under
diffusion and migration. Then we investigate the scaled
capacities as the number of slots per time unit tends to infinity and
the discharge process converges to that of constant rate. In the
limit we obtain deterministic capacity functions identified as the
solution of a nonlinear ordinary differential equation. Informed by
these findings we then consider a class of nonlinear ODEs which can be
solved explicitly. The solutions arising in this manner potentially
reflect the nonlinear dynamics of capacity under discharge of the
battery. In addition we indicate two further directions of
probabilistic modeling. One is to study the deviation of the Markov
chain from its deterministic limit and describe the scaled
fluctuations in terms of a diffusion process. Finally we consider the
deterministic limit process modified to operate under random
discharge, and compare this situation with the previous cases.

\subsection{Markov chain model in slotted time}\label{sec:markov}

We consider a discrete time Markov chain $(V_n,X_n)_{n\ge 0}$ defined
on the state space $E=[0,T]\times[0,N]$, modeling
\begin{align*}
V_n&=\mbox{remaining capacity [Ah] in time slot $n$}\\
X_n&=\mbox{available capacity [Ah] in time slot $n$.} 
\end{align*}
We assume that $N$ and $T$ are integer multiples of $\delta$ and that
all jumps are of size $\delta$. All jumps in the first coordinate are
downwards. Jumps in the second are allowed to be both up and down as
long as $X_n\le N$.  The battery is discharged randomly at constant
current $\delta$ with probability $q$ per slot. Letting
\[
Z_n=\mbox{number of load units discharged in slot $n$}, \quad n\ge 1.
\]
where $\{Z_i\}$ is a sequence of i.i.d random variables with
$P(Z_i=1)=1-P(Z_i=0)=q$, it follows that $\Lambda_n=\delta
\sum_{i=1}^n Z_i$ is the accumulated discharge at slot $n$ and
$V_n=T-\Lambda_n$ is the remaining charge in the battery after $n$
slots. The expected discharge rate is
$\bar\lambda=E(\Lambda_n)/n=\delta q$.  Given the sequence
$(V_n)_{n\ge 0}$ as input we model $(X_n)_{n\ge 0}$ as a Markov chain
modulated by $(V_n)$.  All jumps down of $(X_n)$ are inherited from
the discharge profile and follow those of $(V_n)$.  Jumps up will
occur according to a Markovian dynamics chosen so as to reflect the
recovery properties of the battery. In slot $n+1$ the transition
probabilities depend on the current state $X_n$ and the current
discharge information stored in $V_n$.

\subsubsection*{Transport system.}  
In an attempt to model charge recovery the battery cell is thought to
consist of a randomly structured,
electroactive material which allows transport of charge carrying
species through the electrolyte by liquid or solid state diffusion. 
Internal charge recovery relies on access to transportation channels
of enough connectivity to allow the material to pass from one node to
the other. Also, these channels must be ``activated'' by a sufficient
amount of previous discharge events. To try to describe such a
system, we introduce
\begin{align*}
K_n&=\mbox{available concentration of charge in slot $n$}\\
L_n&=\mbox{number of charge-carrying migration channels in slot $n$}\\
M_n&=\mbox{number of channels in slot $n$ activated by
  electrons at the cathode.}
\end{align*} 
Conditional on $(V_n,X_n)$, the updates $K_{n+1}$ and $L_{n+1}$ are
assumed to have Poisson distributions, such that for given nonnegative
parameters $\alpha>0$ and $\beta>0$
\begin{align*}
K_{n+1}\in \mathrm{Po}(V_n,\beta),\quad L_{n+1}\in {\rm
  Po}(\alpha(N-X_n)),
\end{align*}
and $M_{n+1}$ is binomially sampled from $L_{n+1}$, so that
\[  
M_{n+1}\in {\rm Bin}(L_{n+1},q)\stackrel{d}{=}{\rm Bin}(N-X_n,q \alpha)
\stackrel{d}{\sim} {\rm Po}(q\alpha\,(N-X_n)).
\]
In case there is no discharge in slot $n$, that is $Z_n=0$, then the
battery cell is able to recover one unit of charge if both 
$K_n\ge 1$ and $M_n\ge 1$.  Hence
\begin{align*}
V_{n+1}&=V_n-\delta Z_{n+1}\\
X_{n+1}&=X_n-\delta Z_{n+1}+\delta (1-Z_{n+1}){\mathbf 1}_{\{K_{n+1}\ge
  1,\,M_{n+1}\ge 1\}}
\end{align*}
The dynamics specified by this recursive relation is that, given
$(V_n,X_n)=(v,x)$, if $Z_{n+1}=1$ then the transition in slot
$n+1$ is $(v,x)\to (v-\delta,x-\delta)$ and if $Z_{n+1}=0$ then
\[
(v,x)\to \left\{
\begin{array}{ccc}
(v,x+\delta) & \mbox{with probability} &  (1-e^{-\beta
    v})(1-e^{-\alpha q(N-x)})\\
  (v,x)   &  \mbox{-''-} & 1-(1-e^{-\beta v})(1-e^{-\alpha q(N-x)}).
\end{array}
\right.
\]
Together these relations define a bivariate Markov chain model
$(V_n,X_n)_{n\ge 0}$ with dynamics specified by 
\[
(v,x)\to \left\{
\begin{array}{ccc}
(v-\delta,x-\delta) & \mbox{with prob.} & q\\
(v,x+\delta) & \mbox{-''-} &  (1-q)(1-e^{-\beta v})(1-e^{-\alpha q(N-x)})\\
  (v,x)   &  \mbox{-''-} & (1-q)(1-(1-e^{-\beta v})(1-e^{-\alpha q(N-x)}))
\end{array}
\right.
\]
and, typically, initial condition $(V_0,X_0)=(T,N)$.  We obtain a
drift function and a variance function for the Markov chain from
\[
E(X_{n+1}-X_n|(V_n,X_n))=-q\delta+(1-q)\delta(1-e^{-\beta
  V_n})(1-e^{-\alpha q(N-X_n)})
\]
and 
\[
E((X_{n+1}-X_n)^2|(V_n,X_n))=q\delta^2+(1-q)\delta^2(1-e^{-\beta V_n})(1-e^{-\alpha q(N-X_n)})
\]

\subsection{Continuous time approximation}\label{sec:conttimeapprox}

The drift function $m(v,x)=E(X_{n+1}-X_n|V_n=v,X_n=x)$ suggests a
relevant, approximating ODE for the Markov chain. To formalize this
limit procedure it is convenient to introduce a scaling parameter
$m\ge 1$. At scaling level $m$ the number of slots per unit time is
$m$ and the discharge current jump size is $\delta/m$ rather than
$\delta$.  Consider
\[
X^m_n=\mbox{nominal capacity in slot $n$ at scaling level $m$}
\]
and define for continuous time $t\ge 0$,
\[
X^{(m)}(t)=X^m_{[mt]}. 
\]
Similarly, let $\Lambda^{m}_n$ be the scaled discharge process
with $\delta$ replaced by $\delta/m$ and put
\[
\Lambda^{(m)}(t)=\Lambda^m_{[mt]}, \quad
V^{(m)}(t)=T-\Lambda^{(m)}(t).
\]  
Then 
\[
\Lambda^{(m)}(t)
=\frac{\delta}{m}\sum_{k=1}^{[mt]}Z_i
=\frac{q\delta[mt]}{m}+\sqrt{\frac{q(1-q)\delta^2}{m}}  
\frac{1}{\sqrt{m}}\sum_{k=1}^{[mt]}\frac{Z_i-q}{\sqrt{q(1-q)}}.
\]
By the functional central limit theorem we may introduce a Wiener
process $W_1(t)$ and for large $m$ view $V^{(m)}(t)$ 
as an approximation of the continuous time remaining capacity
function $V_t=T-\lambda t$, in the sense
\[
dV^{(m)}(t)=-\lambda \,dt+\frac{\sigma}{\sqrt{m}}\,dW_1(t),\quad V^{(m)}(0)=T,
\]
where $\sigma^2=q(1-q)\delta^2$ and the approximation error is of the
order $1/\sqrt{m}$.  Moreover, by considering the differential change
of $X^{(m)}(t)$ over a time interval $(t,t+h)$ where $h=1/m$,
\begin{align*}
E(X^{(m)}(t+h)-X^{(m)}(t)|&(V^{(m)}(t),X^{(m)}(t))=(v,x))\\
&=h \big(-\lambda+(1-q)\delta(1-e^{-\beta v})(1-e^{-\alpha q(N-x)})\big)
\end{align*}
and 
\begin{align*}
E((X^{(m)}(t+h)-X^{(m)}(t))^2|&(V^{(m)}(t),X^{(m)}(t))=(v,x))\\
&=h \frac{1}{m}\big(\lambda\delta+(1-q)\delta^2(1-e^{-\beta v})\,(1-e^{-\alpha q(N-x)})\big).
\end{align*}
As above we obtain a deterministic limit equation for large $m$ by
applying a diffusion approximation with the diffusion term of magnitude
$1/\sqrt{m}$.  To simplify notation we put
\[
f(v,x)=\delta(1-q)(1-e^{-\beta v})\,(1-e^{-\alpha q(1-x)}).
\]
Then 
\begin{align*}
&dX^{(m)}(t)=-\lambda\,dt+f(V^{(m)}(t),X^{(m)}(t))\,dt
+\sqrt{\frac{\lambda\delta}{m}+\frac{\delta}{m}
  f(V^{(m)}(t),X^{(m)}(t))}\,dW_2(t), 
\end{align*}
where $W_2$ is another Wiener process. Since $V^{(m)}$ and $X^{(m)}$
have simultaneous jumps, $W_1$ and $W_2$ are dependent with a non-zero
covariance.

\subsection{Deterministic approximation of the Markov chain model}
\label{sec:approxmarkov}

As $m\to\infty$, the stochastic differential equations for $V^{(m)}$
  and $X^{(m)}$ simplify and become the ordinary differential equation
\begin{equation} \label{eq:conttimeapproxode}
\begin{array}{lll}  
&x_t'=-\delta q+(1-q)\delta(1-e^{-\beta v_t})(1-e^{-\alpha q(N-x_t)}),&\qquad x_0=N, \\[2mm]
&v_t'=-\delta q,&\qquad v_0=T.
\end{array}
\end{equation}
Recalling $\lambda=\delta q$, the solution is $v_t=T-\lambda t$ and 
\[
x_t=N-\frac{1}{\alpha q}\ln \Big(1-\alpha q 
\int_0^t\frac{h(s)}{h(t)}\,\lambda ds\Big), 
\]
where
\[
h(t)/h(0)=
\exp\Big\{\lambda \alpha t-(1-q)\alpha
e^{-\beta T}(e^{\lambda\beta t}-1)/\beta\Big\}.
\]
Hence 
\[
\alpha q \int_0^t\frac{h(s)}{h(t)}\,\lambda ds
=\alpha q\int_0^{\lambda t} 
\exp\Big\{-\alpha s+(1-q)\alpha e^{\lambda\beta t-\beta T}(1-e^{-\beta
  s})/\beta\Big\}\,ds    
\]
and so 
\begin{equation}\label{markovode}
x_t=N-\frac{1}{\alpha q}\ln \Big(1-\alpha q 
\int_0^{T-v_t} \exp\Big\{-\alpha s+(1-q)\alpha 
e^{-\beta v_t}(1-e^{-\beta s})/\beta\Big\}\,ds \Big).
\end{equation}
Thus, in analogy with the results obtained for the kinetic battery
models in the previous section, it follows that $(v,x)=(v(t),x(t))$
is an autonomous system from which we can read off the available
capacity as a function of remaining capacity.

\subsubsection*{On-off discharge pattern}

In the Markov chain model of section \ref{sec:markov}, we considered
random discharge at current $\delta$ with probability $q$ per slot,
which converged 
by the law of large numbers to a constant discharge
pattern $\Lambda(t)=\lambda t$, $\lambda=\delta q$, under the
approximation scheme of section \ref{sec:approxmarkov}.  An
alternative would be to run the Markov chain $(V_n,X_n)$ relative to a
given discharge sequence $(Z_n)$, such that in the scaling limit
emerges an on-off discharge pattern as discussed in
section \ref{sec:dischargeprofile}.  We recall that $(J_t)$ is the
piecewise constant indicator function which is one during periods when
the battery is under load and zero otherwise and that
$V_t=T-\delta\int_0^t J_u\,du$ is the remaining capacity at time
$t$. As the degree of resolution increases by taking $m\to \infty$ we
then expect the process of nominal capacity, $(X^{(m)})_{t\ge 0}$, to converge
to a deterministic limiting function $(X_t)_{t\ge 0}$, which solves
the ordinary differential equation
\[
dX_t=-\delta J_t\,dt+\delta(1-J_t)(1-e^{\alpha q(N-X_t)})(1-e^{-\beta V_t})\,dt.
\]
Then 
\[
e^{\alpha q(N-X_t)}\,dX_t=\delta(e^{\alpha q(N-X_t)}-1)
\{-J_t+(1-J_t)(1-e^{-\beta V_t})\}\,dt -\delta J_t\,dt
\]
and so, by introducing a generating factor $H_t$ defined by
\[
\frac{d}{dt}\ln H_t=\alpha q\delta(-J_t+(1-J_t)(1-e^{-\beta V_t})),
\]
hence
\[
H_t=H_0\exp\Big\{\alpha q\delta\int_0^t (-J_u+(1-J_u)(1-e^{-\beta V_u}))\,du\Big\},
\]
we can solve for $X_t$ and obtain
\[
X_t=N-\frac{1}{\alpha q}\ln\Big(1+\frac{\alpha q\delta}{H_t} \int_0^t
J_sH_s\,ds\Big)
\]
With $\lambda=q\delta$ this may be written
\begin{align*}
X_t=N-\frac{1}{\alpha q}\ln\Big(1+ \lambda \alpha \int_0^t J_s\,
e^{\lambda\alpha \int_s^t(J_u-(1-J_u)(1-e^{-\beta V_u}))\,du}\,ds
\Big), 
\end{align*}
which is a closed form solution for capacity in terms of the given
discharge profile, coded by $(J_t)$ and $(V_t)$ with parameters $\delta$
and $q$, and the additional battery parameters $\alpha$ and $\beta$.

\subsection{Deviation from deterministic behavior}

We have found a deterministic function $(v_t,x_t)$ which satisfies the
ordinary differential equation (\ref{eq:conttimeapproxode}), in the
limit $m\to\infty$ of the scaled Markov chain
$(V^{(m)}(t),X^{(m)}(t))_{t\ge 0}$ with constant discharge rate
$\lambda$.  Now we introduce the random quantities
\[
{\mathcal V}_m(t)=\sqrt{m}(V^{(m)}(t)-v_t),
\quad {\mathcal X}_m(t)=\sqrt{m}(X^{(m)}(t)-x_t)
\]
in order to study the fluctuations of the scaled Markov chain around
its deterministic limit $(v_t,x_t)$.  With $W_1$ and $W_2$ as in
section \ref{sec:conttimeapprox} we obtain for large $m$,
\begin{align*}
d{\mathcal V}_m(t)&=\sigma \,dW_1(t)\\
d{\mathcal X}_m(t)&=\sqrt{m}(f(V^{(m)}(t),X^{(m)}(t))-f(v_t,x_t))\,dt\\
&\quad +\sqrt{\lambda\delta+\delta f(V^{(m)}(t),X^{(m)}(t))}\,dW_2(t),
\end{align*}
where ${\mathcal V}_m(0)={\mathcal X}_m(0)=0$.  It follows by a Taylor
expansion of $f(v,x)$ that
\[
\sqrt{m}(f(V^{(m)}(t),X^{(m)}(t))-f(v_t,x_t))
\approx  f'_v(v_t,x_t){\mathcal V}_m(t)\,dt+f'_x(v_t,x_t){\mathcal X}_m(t)
\]
In the scaling limit $m\to\infty$ we therefore expect that the fluctuation
process ${\mathcal X}_m(t)$ converges to a diffusion process
${\mathcal X}(t)$, such
that 
\begin{align*}
d{\mathcal X}(t)=f'_v(v_t,x_t)\sigma W_1(t)\,dt+f'_x(v_t,x_t){\mathcal
  X}(t)\,dt
+\sqrt{\lambda\delta+\delta f(v_t,x_t)}\,dW_2(t).
\end{align*}
This stochastic differential equation for ${\mathcal X}(t)$ can be seen as a
generalized Ornstein Uhlenbeck process 
\[
d{\mathcal X}(t)=a_t W_1(t)\,dt+b_t\,{\mathcal X}(t)\,dt+c_t\,dW_2(t),
\]
with time-inhomogeneous drift and variance coefficients $b_t$  and
$c_t$, which is also modulated by an additional, random, drift $a_t
W_1(t)$.

\subsection{Other versions of nonlinear ODE battery capacity models}
\label{sec:nonlinearode}

To derive the capacity dynamics in (\ref{markovode}) we were guided by
a Markov chain argument based on a simplified view of charge transport
inside the battery cell.  In this final section we mention a more
general class of deterministic non-linear recovery models for the
dynamics of nominal capacity and state of charge. Again we write $x_t$
for nominal capacity and $v_t$ for the remaining capacity in the cell
as functions of time $t\ge 0$. Here $v_t=T-\Lambda(t)$ and
$\Lambda(t)$ is the discharge process with average discharge current
$\bar\lambda=\lambda$.  

The general principle for charge recovery is that of balancing the
discharge rate in $x_t$ by a positive drift of the nominal capacity
due to the release and transport of stored charges.  It is reasonable
that such effects are proportional to the applied average load $\bar
\lambda$ and exist as long as the theoretical capacity of the cell has
not yet been fully consumed, that is $v_t\ge 0$.  Furthermore, the
gain in capacity due to recovery depends on the migration of
charge-carriers, and the strength of this effect should increase with
the gap $N-x_t$ between maximal and actual capacity.  Hence, to
capture charge recovery in the framework of one-dimensional
differential equations, we may let $F:[0,\infty)\to [0,1]$ and
  $G:[0,\infty)\to [0,1]$ be non-decreasing functions with $F(0)=0$,
    $G(0)=0$, and consider equations of the generic shape
\begin{equation}\label{genericode}
dx_t=-\Lambda(dt)+\lambda F(N-x_t)G(v_t)\,dt, \quad t\le t_0,\quad x_0=N,
\end{equation}
where $t_0$ is battery life defined as the maximal time $t$ for which
$x_t\ge 0$.  

The special case of (\ref{genericode}) where $F(u)=1-e^{-au}$ and $a$
is a positive constant which measures the strength of migration of
charges due to the existing electric field in the battery, is given by
\begin{equation}\label{expode}
dx_t=-\Lambda(dt) + \lambda (1-e^{-a(N-x_t)})G(v_t)\,dt, \quad X_0=N. 
\end{equation}
Because of the separation of variables we may write 
\[
d h_t(e^{a(N-x_t)}-1)=a\int_0^t h_s\,\Lambda(ds),\quad 
\ln(h_t/h_0)=-a\Lambda(t)+\lambda a\int_0^tG(v_s)\,ds.
\]
For example, taking the simplest case $\Lambda(t)=\lambda t$,
\[
x_t=N-\frac{1}{a}\ln\Big(1+ \lambda a\int_0^t
e^{\lambda a\int_s^t (1-G(v_u))\,du}\,ds\Big).
\]
Equivalently, 
\[
x_t=N-\frac{1}{a}\ln\Big(1+ a\int_0^{T-v_t}
\exp\Big\{a\int_{v_t}^{v_t+s} (1-G(u))\,du\Big\}\,ds\Big).
\]
It is immediate in this model that the pair $(v_t,x_t)$ is autonomous
so that available capacity $x=x(v)$ can be viewed as a function of
remaining capacity only. Moreover, the capacity dynamics is load-invariant in
the sense that the curve $(v,x(v))$ does not depend on $\lambda$. In
other words, batteries drained using different choices of $\lambda$
will exhibit different battery life, longer the smaller intensity of
the current, but the used capacity $T-v$ at end of life will be the
same in each case.

\subsubsection*{Performance measures}

In the framework of the nonlinear ODE approach we have obtained, just
as for the linear models studied previously, a phase plane relation
$(v,x)=(v_t,x_t)$ for the capacity dynamics. Via state-of-charge $\wt
x=x/N$ we may proceed as before to modeling the corresponding voltage.
Without going into details this will give us some capacity threshold
$x_0$ below which the battery is no more functioning.  Then the unused
capacity that remains in the battery at the end of its life time is
the unique solution $v_0$ of $x(v)=x_0$.  Thus, the delivered capacity
is $D=T-v_0$ and the gained capacity is $G=D-N$.  Our model allows for
some qualitative conclusions about these quantities as well as
numerical studies of special cases. Various discharge processes
$\Lambda$ can be compared against experimental data and relevant 
parameters estimated.  To give an indication of this
type of work, Figure \ref{fig:dischargeprofile} shows the result of
repeated independent simulations based on (\ref{expode}) with
$G(v)=1-e^{-\beta v}$ and $\Lambda$ a random Poisson discharge
process as described in Section \ref{sec:dischargeprofile}. The
parameters used for these particular simulations are chosen
arbitrarily such that the output appears to mimic that of a real
battery.  The upper panel shows the phase plane traces of the resulting
solutions $(v,x)$. The lower panel shows the corresponding random
paths of the state-of-charge $\wt x_t$ as function of time.  Also in
the lower panel we have superimposed (solid red curve) the
state-of-charge for the case of constant discharge with the same
average load, now given by the explicit solution of equation
(\ref{expode}) with $\Lambda(t)=\lambda t$.
\begin{figure}[htb]
\centerline{\hbox{\epsfxsize=11.0 truecm
            \epsffile{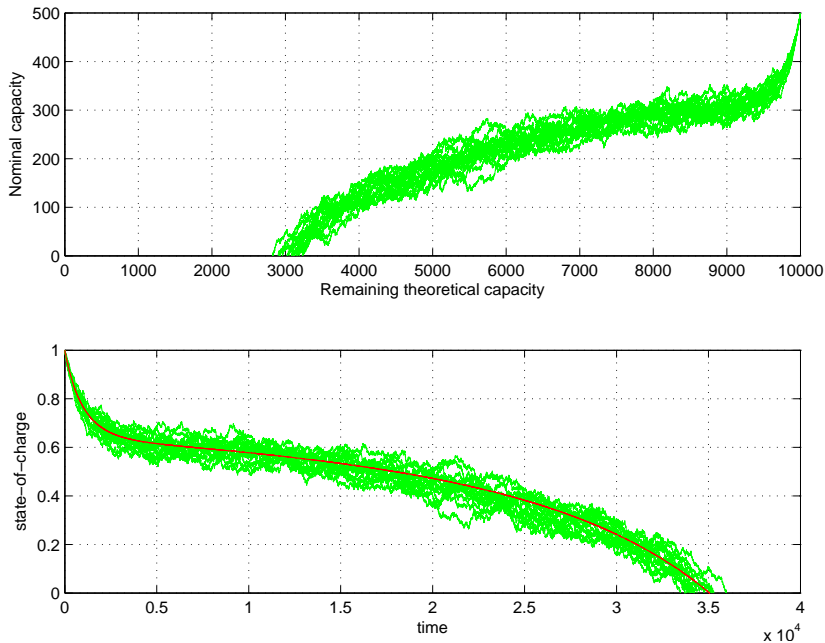}}}
\caption{Upper panel: Phase plane solutions $(v,x)$ of
  (\ref{expode}) with $G$ exponential and $\Lambda$ simulated
  Poisson processes.  Lower panel: Corresponding simulated paths of
  state-of-charge $\widetilde x_t$ as functions of time.  The
  superimposed smooth curve is the deterministic solution for constant
  discharge of the same average rate.} 
\label{fig:dischargeprofile}
\end{figure}

\end{document}